\numberwithin{equation}{section}
\newcommand{\ot}{\otimes}
\newcommand{\id}{\mathord{\operatorname{id}}}
\newcommand{\C}{\mathbb{C}}
\newcommand{\irr}{{\rm Irr}}
\newcommand{\Mor}{{\rm Mor}}
\theoremstyle{plain}
\newtheorem{theorem}{Theorem}[section]
\newtheorem*{theoremA}{Theorem A}
\newtheorem{proposition}[theorem]{Proposition}
\theoremstyle{definition}
\begin{document}

\begin{center}
{\LARGE\bf  Monoidal Rigidity for Free Wreath Products}

\bigskip

{\sc Pierre Fima and Lorenzo Pittau}
\end{center}

\begin{abstract}
\noindent In this note we observe that any compact quantum group monoidally equivalent, in a nice way, to a free wreath product of a compact quantum group $G$ by the quantum automorphism group of a finite dimensional C*-algebra with a $\delta$-form is actually isomorphic to a free wreath product of $G$ by the quantum automorphism group of another finite dimensional C*-algebra with a $\delta$-form.
\end{abstract}

\section*{Introduction}

The theory of compact quantum groups, which includes the fundamental constructions of Drinfeld and Jimbo \cite{dri86,dri87,jim85} as $q$-deformation of the universal enveloping algebras of classical Lie algebras, was introduced by Woronowicz in the eighties \cite{wor87}. Later, Wang \cite{wan98} introduced the quantum automorphism group $G_{(B,\psi)}$ of a finite dimensional C*-algebra $B$ with a state $\psi$ and Banica \cite{ban99a} studied its representation category when $\psi$ is a $\delta$-form.

Bichon \cite{bic04} introduced the construction of free wreath product $G\wr_* S_n^+$ of a compact quantum group $G$ by the permutation quantum group $S_n^+$ by using an action of $S_n^+$ on $n$ copies of $G$. In analogy with the classical case, the free wreath product allows to describe the quantum symmetry group of $n$ copies of a finite graph in terms of the symmetry group of the graph and of $S_n^+$. The representation category of a free wreath product $G\wr_* S_n^+$, when $G$ is Kac, was computed in \cite{lt14} by Lemeux and Tarrago. The free wreath product $H^+_{(B,\psi)}(G)$ of a compact quantum and a discrete group has been introduced by the second author in the case of $G=\widehat{\Gamma}$, the dual of a discrete group $\Gamma$, and the representation category has been computed in \cite{pit14}. For a general $G$, the construction and the computation of the representation category has been studied in \cite{FP15}.

This note is a continuation of the general study of free wreath product initiated in \cite{FP15}. Since we know from \cite{FP15} the rigid C*-tensor category given by the representation theory of a free wreath product $H^+_{(B,\psi)}(G)$, we will focus now on compact quantum groups which are monoidally equivalent to such a free wreath product. The purpose of this note is to prove the following rigidity result.

\begin{theoremA}\label{ThmA}
Let $G,G_0$ be compact quantum groups and $(B,\psi)$ be a finite dimensional C*-algebra with a $\delta$-form $\psi$. If $G$ is monoidally equivalent to $H^+_{(B,\psi)}(G_0)$ via a {\rm nice enough} unitary tensor functor then there exists a finite dimensional C*-algebra $D$ with a $\delta$-form $\psi_D$ on $D$ such that $G$ is isomorphic to $H^+_{(D,\psi_D)}(G_0)$.
\end{theoremA}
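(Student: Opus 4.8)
The plan is to pass to the representation categories and apply Tannaka--Krein duality in its monoidal form. By Woronowicz's reconstruction theorem \cite{wor87} together with the description of monoidal equivalences due to Bichon, De Rijdt and Vaes, giving a compact quantum group $G$ monoidally equivalent to $H:=H^+_{(B,\psi)}(G_0)$ is the same as giving the rigid $\cs$-tensor category $\mathcal{C}:=\operatorname{Rep}(H)$ together with a unitary fibre functor $F\colon\mathcal{C}\to\operatorname{Hilb}_f$, from which $G=G(\mathcal{C},F)$ is reconstructed; moreover two fibre functors yield isomorphic quantum groups precisely when they are unitarily monoidally naturally isomorphic. The first step is thus to feed in the explicit computation of $\mathcal{C}$ from \cite{FP15}: this category is generated by a copy of $\operatorname{Rep}(G_0)$ and a single \emph{fundamental} object $\alpha$ arising from the quantum-automorphism factor $G_{(B,\psi)}$, its morphism spaces are spanned by the decorated noncrossing diagrams of \cite{FP15}, and as an abstract tensor category it depends on $(B,\psi)$ only through the loop parameter $\delta$.

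Next I would use the nice-enough hypothesis to split $F$. Its role is precisely to force $F$ to restrict to the standard fibre functor on the subcategory $\operatorname{Rep}(G_0)\subseteq\mathcal{C}$, so that the $G_0$-factor is kept on the nose and not merely up to equivalence, while $F$ is left free on $\alpha$. Restricting $F$ to the full tensor subcategory generated by $\alpha$ --- a copy of the Temperley--Lieb category $TL(\delta)\cong\operatorname{Rep}(G_{(B,\psi)})$ --- then produces a unitary fibre functor on $TL(\delta)$. The third step is to classify this restricted functor. By the classification of unitary fibre functors on $TL(\delta)$, equivalently of the compact quantum groups monoidally equivalent, through the fundamental object, to $\operatorname{SU}_q(2)$ with $\delta=q+q^{-1}$, every such functor is unitarily monoidally isomorphic to the one induced by the defining action of $G_{(D,\psi_D)}$ for some finite-dimensional $\cs$-algebra $D$ carrying a $\delta$-form $\psi_D$. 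Here the quantum dimension of $\alpha$, a monoidal invariant fixed by $\delta$, is automatically preserved, whereas the classical dimension $\dim F(\alpha)$, governed by $D$, is allowed to change; this is exactly the freedom that permits $\dim D\neq\dim B$. This $(D,\psi_D)$ is the algebra sought in the statement, and the parameter $\delta$ is shared because it is read off from the common tensor category $\mathcal{C}$.

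Finally I would reassemble the two pieces. Having fixed the $\operatorname{Rep}(G_0)$-part of $F$ and identified its $TL(\delta)$-part with the $(D,\psi_D)$ functor, it remains to verify that gluing them along the free-wreath structure of $\mathcal{C}$ reproduces $\operatorname{Rep}(H^+_{(D,\psi_D)}(G_0))$ with its canonical fibre functor, whence $G\cong H^+_{(D,\psi_D)}(G_0)$ by the uniqueness clause of Tannaka--Krein. I expect this compatibility to be the main obstacle: one must check, on the decorated-noncrossing-partition generators of the morphism spaces, that the free gluing of the $G_0$-fibre functor with the $TL(\delta)$-fibre functor commutes with the free wreath product recipe, so that $G(\mathcal{C},F)$ is genuinely a free wreath product rather than merely monoidally equivalent to one. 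The nice-enough hypothesis is what makes this gluing well posed, by pinning down the $G_0$-component of $F$ once and for all.
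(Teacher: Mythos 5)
Your steps 1--4 run parallel to what the paper actually does, and they correspond to real content: the paper also extracts $(D,\psi_D)$ from the restriction of $F$ to the tensor subcategory generated by $a(\varepsilon)$, only it does so by hand rather than by citation, setting $D:=K_\varepsilon$, $m_D:=F(m_B)F_2$, $\eta_D:=F(\eta_B)$ and checking directly (Proposition \ref{PropAlgebra} and the proposition following it) that this yields a C*-algebra with a $\delta$-form. But there is a structural inaccuracy in your setup: $\mathcal{R}(G_0)$ is \emph{not} a tensor subcategory of $\mathcal{C}=\mathcal{R}(H^+_{(B,\psi)}(G_0))$. The assignment $\alpha\mapsto a(\alpha)$ is not a tensor functor ($a(\alpha)\ot a(\beta)$ does not decompose as $a$ of $\alpha\ot\beta$; the fusion is free-product-like), and only the subcategory generated by $a(\varepsilon)$ is an honest tensor subcategory (and even that one is the even, $SO(3)$-fusion part of $TL(\delta)$, i.e.\ $\mathcal{R}(G_{(B,\psi)})$, rather than $TL(\delta)$ itself). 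Consequently the phrase ``$F$ restricts to the standard fibre functor on $\mathcal{R}(G_0)\subseteq\mathcal{C}$'' does not parse; the nice-enough hypothesis has to be read as in the paper, namely the existence of unitaries $V_\alpha\in\mathcal{L}(K_\alpha,K_\varepsilon\ot H_\alpha)$ intertwining $F((m_B\ot S)\Sigma_{23})F_2$ with $(m_D\ot S)\Sigma_{23}$ for all $S\in\Mor(\alpha\ot\beta,\gamma)$. This is a compatibility of $F$ with the morphisms that \emph{mix} the quantum-automorphism part and the $G_0$-part, not a splitting of $F$ into two independent restrictions.

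The serious problem is that your step 5 \emph{is} the theorem, and you leave it unproved. Precisely because of the point above, $\mathcal{C}$ is not a free gluing of $\mathcal{R}(G_0)$ with a Temperley--Lieb category, and no universal property is established in \cite{FP15} (or invoked anywhere in your argument) asserting that a fibre functor on $\mathcal{C}$ is determined, up to unitary monoidal natural isomorphism, by its values on the two ``parts''; formulating and proving such a statement is exactly the missing work. The paper replaces this abstract gluing by a concrete two-step argument (Theorem \ref{ThmIso}): (i) using the unitaries $V_\alpha$, the representations $u_\alpha:=(V_\alpha\ot 1)v_\alpha(V_\alpha^*\ot 1)$ of $G$ satisfy the defining relations of the free wreath product, so the universal property of $C(H^+_{(D,\psi_D)}(G_0))$ yields a unital $*$-homomorphism $\pi$ onto $C(G)$, surjective because $F$ is essentially surjective; (ii) $\pi$ is injective because it intertwines the Haar states, which reduces to the equality $\dim\Mor(\varepsilon,\widetilde{a}(\alpha_1)\ot\dots\ot\widetilde{a}(\alpha_n))=\dim\Mor(\varepsilon,a(\alpha_1)\ot\dots\ot a(\alpha_n))$, valid by \cite[Theorem 3.5]{FP15} since both sides depend only on $G_0$ and $\alpha_1,\dots,\alpha_n$ and not on which C*-algebra carries the $\delta$-form. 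Some substitute for (i) and (ii) is unavoidable in your route as well: without it, your steps 1--4 only show that $G$ is monoidally equivalent to $H^+_{(D,\psi_D)}(G_0)$ --- which is essentially the hypothesis you started from --- and not that $G$ is isomorphic to it.
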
 

The precise meaning of \textit{nice enough} will be given in Section \ref{ProofA}, just before the statement of Theorem \ref{ThmIso}. When $G_0=\{e\}$ is the trivial (quantum) group, any unitary tensor functor is nice enough. However, when $G_0=\{e\}$, a stronger statement than the one of Theorem A is known to hold: it is proved in \cite{Mr15} that any compact quantum group having the same fusion rules as the one of $SO(3)$ is actually isomorphic to the quantum automorphism group of some finite dimensional C*-algebra.

The paper has two sections: Section \ref{Prel} is a preliminary section in which we introduce the notations of this paper and recall some useful results and Section \ref{ProofA} contains the proof of Theorem A (see Theorem \ref{ThmIso}).

\section{Preliminaries}\label{Prel}

In this paper, we always assume the scalar products on Hilbert spaces to be linear in the first variable. For Hilbert spaces $H,K$, we denote by $\mathcal{L}(H,K)$ the Banach space of bounded linear maps from $H$ to $K$ and we write $\mathcal{L}(H):=\mathcal{L}(H,H)$. The same symbol $\ot$ is used to denote the tensor product of Hilbert spaces or the minimal tensor product of C*-algebras.

Given a unital C*-algebra $A$ and unitaries $u\in\mathcal{L}(H)\ot A$ and $v\in\mathcal{L}(K)\ot A$, for $H,K$ Hilbert spaces, we define $\text{Mor}(u,v):=\{T\in\mathcal{L}(H,K)\,:\,(T\ot\ 1)u=v(T\ot 1)\}$. We also use the notation
$$u\ot v:=u_{13}v_{23}\in\mathcal{L}(H\ot K)\ot A,$$
where $v_{23}:=\id_H\ot v$ and $u_{13}:=(\sigma\ot\id_A)(\id_K\ot u)$ and $\sigma\,:\,\mathcal{L}(K)\ot\mathcal{L}(H)\rightarrow\mathcal{L}(H)\ot\mathcal{L}(K)$ is the flip automorphism.

For $G$ a compact quantum group in the sense of \cite{wor98} we denote by $C(G)$ the maximal C*-algebra of $G$ i.e. the enveloping C*-algebra of the $*$-algebra given by the linear span of the coefficients of the irreducible representations of $G$. We will also denote by $\varepsilon$ the trivial representation of $G$, $\irr(G)$ the set of equivalence classes of irreducible representations of $G$ and by $\mathcal{R}(G)$ the rigid C*-tensor category of finite dimensional unitary representations of $G$. We refer to the chapter $2$ of the book \cite{nt13} for all the details concerning rigid C*-tensor categories and monoidal equivalence.

Let $B$ be a finite dimensional C*-algebra and $\psi$ a faithful state on $B$. Then $B$ is a Hilbert space with the scalar product defined by $\langle a,b\rangle:=\psi(b^*a)$. Let $m_B\,:\, B\ot B\rightarrow B$ be the multiplication on $B$ and $\eta_B\,:\,\C\rightarrow B$ be the unit. We call $\psi$ a $\delta$\textit{-form} when $m_Bm_B^*=\delta\id_B$, where $m_B^*\,:\, B\rightarrow B\ot B$ is the adjoint of $m_B$ with respect to the scalar product on $B$ defined above.

Let $G_0$ be a compact quantum group and for $\alpha\in\irr(G_0)$, we choose a representative $u^\alpha$ acting on the Hilbert space $H_\alpha$. The free wreath product of $G_0$ by the quantum automorphism group $G_{(B,\psi)}$ of $(B,\psi)$, denoted by $H^+_{(B,\psi)}(G_0)$, is the compact quantum group defined in \cite{FP15} as follows. The full C*-algebra $C(H^+_{(B,\psi)}(G_0))$ of this quantum group is the universal unital C*-algebra generated by the coefficients of matrices $a(\alpha)\in\mathcal{L}(B\ot H_\alpha)\ot C(H^+_{(B,\psi)}(G_0))$, for $\alpha\in\irr(G_0)$, such that:
\begin{itemize}
\item $a(\alpha)$ is unitary for all $\alpha\in\irr(G_0)$,
\item $(m_B\ot S)\Sigma_{23}\in\text{Mor}(a(\alpha)\ot a(\beta),a(\gamma))$ for all $\alpha,\beta,\gamma\in\irr(G_0)$ and all $S\in\text{Mor}(\alpha\ot\beta,\gamma)$,
\item $\eta_B\in\text{Mor}(\varepsilon, a(\varepsilon))$.
\end{itemize}
Moreover, there exists a unique unital $*$-homomorphism $\Delta\,:\,C(H^+_{(B,\psi)}(G_0))\rightarrow C(H^+_{(B,\psi)}(G_0))\ot C(H^+_{(B,\psi)}(G_0))$ for which the elements $a(\alpha)$ are representations for all $\alpha\in\irr(G_0)$. The pair $(C(H^+_{(B,\psi)}(G_0)),\Delta)$ is a compact quantum group, called the free wreath product.

When $\psi$ is a $\delta$-form, the representation theory of $H^+_{(B,\psi)}(G_0)$ is totally understood \cite{FP15}. In particular, the dimension of the spaces $\text{Mor}(\varepsilon,a(\alpha_1)\ot\dots\ot a(\alpha_n))$ only depends on $G_0$ and $\alpha_1,\dots,\alpha_n$ and not on $(B,\psi)$.

\section{Proof of Theorem A}\label{ProofA}

Suppose that $G$ and $G_0$ are compact quantum groups and $G$ is unitary monoidally equivalent to $H^+_{(B,\psi)}(G_0)$, where $\psi$ is a $\delta$-form. This means (see \cite{nt13}) that there exist unitary tensor functors $F\,:\,\mathcal{R}(H^+_{(B,\psi)}(G_0))\rightarrow\mathcal{R}(G)$, $H\,:\,\mathcal{R}(G)\rightarrow\mathcal{R}(H^+_{(B,\psi)}(G_0))$ such that $FH$ and $HF$ are naturally monoidally unitarily isomorphic to the identity functors, meaning that the natural isomorphisms $FH\simeq\iota$ and $HF\simeq\iota$ are both implemented by unitaries.

For $\alpha\in\irr(G_0)$, we choose a representative $u^\alpha$ acting on the Hilbert space $H_\alpha$. Recall that the representation $a(\alpha)$ is acting on $B\ot H_\alpha$. We define the unitary representation $v_\alpha:=F(a(\alpha))$ of $G$ acting on the finite dimensional Hilbert space $K_\alpha$. For all $u,v$ finite dimensional unitary representations of $H^+_{(B,\psi)}(G_0)$, we denote by the same symbol $F_2\in{\rm Mor}(F(u)\ot F(v),F(u\ot v))$ the unitary given by the definition of the unitary tensor functor $F$. Since the category is strict we have:
\begin{equation}\label{Eq1}F_2\circ(F_2\ot \id)=F_2\circ(\id\ot F_2)\in\Mor(F(u)\ot F(v)\ot F(w),F(u\ot v\ot w))\text{ for all }u,v,w.\end{equation}
Moreover, the naturally of $F_2$ means that, for all $u,v,u',v'$ finite dimensional unitary representations we have:
\begin{equation}\label{Eq2}
 F(f\ot g)\circ F_2=F_2\circ(F(f)\ot F(g))\in\Mor(F(u)\ot F(v),F(u'\ot v'))\quad\text{for all }f\in\Mor(u,u'),g\in\Mor(v,v').
\end{equation}
Finally, we note that, since $F$ is unitary, we may and will assume that
\begin{equation}\label{Eq3}
F(\varepsilon)=\varepsilon\quad\text{and}\quad F(\id)=\id\text{ for all }\id\in\Mor(u,u).\end{equation}

Define the finite dimensional Hilbert space $D=K_\varepsilon$. Since $m_B\in\text{Mor}(a(\varepsilon)\ot a(\varepsilon),a(\varepsilon))$ and $\eta_B\in\text{Mor}(\varepsilon,a(\varepsilon))$, we may define the linear maps
$$m_D\,:\, D\ot D\rightarrow D,\,\, m_D:=F(m_B)F_2\quad\text{and}\quad\eta_D\,:\,\C\rightarrow D,\,\,\eta_D:= F(\eta_B).$$

\begin{proposition}\label{PropAlgebra} $(D,m_D,\eta_D)$ is a unital algebra and $m_Dm_D^*=\delta\id_D$.
\end{proposition}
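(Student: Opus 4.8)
The plan is to transport the algebra structure from $(B,m_B,\eta_B)$ to $(D,m_D,\eta_D)$ along the functor $F$, using the fact that $F$ is a \emph{unitary tensor functor}: the multiplicativity constraint $F_2$ and its coherence (\ref{Eq1}), naturality (\ref{Eq2}), and the normalization (\ref{Eq3}) are exactly the data needed to turn the algebra axioms for $B$ into the algebra axioms for $D$. First I would check associativity. Since $\psi$ is a $\delta$-form (more relevantly, $B$ is an associative algebra), we have the identity $m_B\circ(m_B\ot\id)=m_B\circ(\id\ot m_B)$ in $\Mor(a(\varepsilon)\ot a(\varepsilon)\ot a(\varepsilon),a(\varepsilon))$. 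I would apply $F$ to both sides and use naturality (\ref{Eq2}) to pull $F$ inside the compositions, converting $F(m_B\ot\id)$ into $F(m_B)\ot F(\id)=F(m_B)\ot\id$ (by (\ref{Eq3})) up to the appropriate $F_2$, and similarly for the other side. Feeding in the definition $m_D=F(m_B)F_2$ and carefully inserting the coherence relation (\ref{Eq1}) to match the two bracketings of $F_2$, the image identity should collapse to $m_D\circ(m_D\ot\id)=m_D\circ(\id\ot m_D)$.

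Next I would verify the unit axioms. From $m_B\circ(\eta_B\ot\id)=\id=m_B\circ(\id\ot\eta_B)$ in $\Mor(a(\varepsilon),a(\varepsilon))$, I would again apply $F$, use naturality to distribute $F$ over the composition, use (\ref{Eq3}) to identify $F(\eta_B)=\eta_D$ and $F(\id)=\id$, and absorb the $F_2$ factors so that $m_D\circ(\eta_D\ot\id)=\id=m_D\circ(\id\ot\eta_D)$. The only subtlety is the appearance of $F_2$ constraints involving the trivial representation $\varepsilon=F(\varepsilon)$; since $F(\varepsilon)=\varepsilon$ one expects $F_2$ to act as the canonical identification $\varepsilon\ot F(u)\cong F(u)\cong F(\varepsilon\ot u)$, and I would record this compatibility explicitly before using it.

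Finally, for the relation $m_Dm_D^*=\delta\id_D$, I would use that $F$ is \emph{unitary}, so it preserves adjoints: $F(m_B)^*=F(m_B^*)$ and $F_2^*=F_2^{-1}$. Then
\[
m_D m_D^* = F(m_B)F_2\,F_2^{*}F(m_B)^{*} = F(m_B)F(m_B^{*}) = F(m_B m_B^{*}) = F(\delta\id_B)=\delta\,\id_D,
\]
where the middle equality uses that $F$ is a $*$-functor respecting composition and the last uses (\ref{Eq3}). The main obstacle I anticipate is purely bookkeeping rather than conceptual: keeping the $F_2$ insertions in the correct tensor legs throughout the associativity computation, so that the two coherence relations in (\ref{Eq1}) line up with the two ways of bracketing $m_B\ot m_B$. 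Once the associativity diagram is drawn and the coherence is applied at the right node, the remaining steps are mechanical applications of functoriality, naturality, and unitarity.
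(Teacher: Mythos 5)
Your proposal is correct and follows essentially the same route as the paper: transport associativity, the unit axioms, and the relation $m_Bm_B^*=\delta\id_B$ through $F$ using coherence (\ref{Eq1}), naturality (\ref{Eq2}), normalization (\ref{Eq3}), and unitarity of $F$ and $F_2$. The unit-constraint subtlety you flag (that $F_2$ involving $\varepsilon=F(\varepsilon)$ acts as the canonical identification) is indeed used, silently, in the paper's own computation of $m_D(\eta_D\ot\id_D)$, so recording it explicitly is a sound refinement rather than a deviation.
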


\begin{proof}
Using Equations (\ref{Eq1}), (\ref{Eq2}), (\ref{Eq3}) and the associativity of $m_B$ we get:
\begin{eqnarray*}
m_D\circ(m_D\ot\id)&=&F(m_B)F_2\circ(F(m_B)F_2\ot\id)=F(m_B)\circ F_2\circ (F(m_B)\ot \id)\circ(F_2\ot 1)\\
&=&F(m_B)\circ F(m_B\ot \id)\circ F_2\circ(F_2\ot 1)=F(m_B\circ(m_B\ot\id))\circ F_2\circ(\id\ot F_2)\\
&=&F(m_B\circ(\id\ot m_B))\circ F_2\circ(\id\ot F_2)=F(m_B)\circ F(\id\ot m_B)\circ F_2 \circ(\id\ot F_2)\\
&=&F(m_B)\circ F_2\circ (\id\ot F(m_B))\circ(\id\ot F_2) =F(m_B)F_2\circ(\id\ot F(m_B)F_2)\\
&=&m_D\circ(\id\ot m_D).
\end{eqnarray*}
Hence, $m_D$ is associative. Moreover, using Equations $(\ref{Eq2})$, $(\ref{Eq3})$ and the the fact that $\eta_B$ is the unit of $(B,m_B)$, which means that $m_B(\eta_B\ot\id_B)=\id_B=m_B(\id_B\ot\eta_B)$, we find:
$$m_D(\eta_D\ot\id_D)=F(m_B)F_2(F(\eta_B)\ot\id_D)=F(m_B)F_2F_2^*F(\eta_B\ot\id_B)=F(m_B(\eta_B\ot\id_B))=F(\id_B)=\id_D\text{ and,}$$
$$m_D(\id_D\ot\eta_D)=F(m_B)F_2(\id_D\ot F(\eta_B))=F(m_B)F_2F_2^*F(\id_B\ot \eta_B)=F(m_B(\id_B\ot\eta_B))=F(\id_B)=\id_D.$$
Finally, $m_Dm_D^*=F(m_B)\circ F_2\circ F_2^*\circ F(m_B)^*=F(m_Bm_B^*)=\delta F(\id_B)=\delta\id_{D}$.
\end{proof}

We are now ready to turn $D$ into a $*$-algebra. Define $t=m_D^*\circ\eta_D=F_2^*F(m_B^*\eta_B)\in \mathcal{L}(\C,D\ot D)$. We may and will view $t\in D\ot D$. Define the anti-linear map $S_D\,:\,D\rightarrow D$ by $S_D(x)=(x^*\ot\id)(t)$. Denote by $L_x\in\mathcal{L}(D)$ the bounded operator given by left multiplication by $x\in D$ and write $1_D:=\eta_D(1)\in D$.

\begin{proposition}$S_D$ is an involution on the unital algebra $D$. Moreover, equipped with this involution and the norm defined by $\Vert x\Vert_D:=\Vert L_x\Vert$, for $x\in D$, $D$ is a unital C*-algebra and the following holds.
\begin{enumerate}
\item $(L_x)^*=L_{S_D(x)}$ for all $x\in D$.
\item $\psi_D\,:\,D\rightarrow\C$, $x\mapsto \langle x,1_D\rangle$ is a faithful state on $D$ satisfying $\psi_D(S_D(y)x)=\langle x,y\rangle$ for all $x,y\in D$.
\item $\psi_D$ is a $\delta$-form on $D$.
\end{enumerate}
\end{proposition}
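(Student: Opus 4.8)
The plan is to prove everything by \emph{transport of structure} along the unitary tensor functor $F$: the triple $(B,m_B,\eta_B)$, together with the adjoints $m_B^*$ and $\eta_B^*$ determined by the Hilbert space structure of $(B,\psi)$, is a Frobenius-algebra (``Q-system'') datum living entirely among morphisms of $\mathcal{R}(H^+_{(B,\psi)}(G_0))$, and every assertion of the Proposition is either a categorical identity among $m_D,\eta_D$ and their adjoints or a positivity statement, both of which survive a $*$-preserving tensor functor. I would first record the companion formulae $m_D^*=F_2^*F(m_B^*)$ and $\eta_D^*=F(\eta_B^*)=F(\psi)$, valid because $F$ preserves adjoints of morphisms and each $F_2$ is unitary; this already rewrites $t=F_2^*F(m_B^*\eta_B)$. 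As a warm-up I would check on $B$ itself that $S_B(x):=(x^*\ot\id)(m_B^*\eta_B)$ equals the original involution $x\mapsto x^*$, a one-line computation from $\langle m_B^*\eta_B,a\ot b\rangle=\psi(b^*a^*)$ and $\langle x^*,b\rangle=\psi(b^*x^*)$; this exhibits $S_D$ as the $F$-image of $S_B$.

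Next I would verify that $S_D$ is an involution. Antilinearity is immediate from the definition, and the three remaining properties, namely $S_D(1_D)=1_D$, $S_D^2=\id$, and anti-multiplicativity $S_D\circ m_D=m_D\circ(S_D\ot S_D)\circ\Sigma$, each hold for $S_B=*$ on $B$ and are encoded by identities among $m_B,\eta_B$ and their adjoints. I would carry out anti-multiplicativity as the model case: expand $S_D(m_D(x\ot y))$ and $m_D(S_D(y)\ot S_D(x))$ in terms of $F(m_B)$, $F(m_B^*)$, $t$ and the $F_2$'s, then use multiplicativity (\ref{Eq1}), naturality (\ref{Eq2}) and normalization (\ref{Eq3}) of $F$, together with $F_2^*F_2=F_2F_2^*=\id$, to collapse each side to the $F$-image of the corresponding $B$-identity; the two then coincide.

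For the C*-structure the crux is item (1). Viewing a vector $x\in D$ as a morphism $x\colon\C\to D$, one has $L_x=m_D\circ(x\ot\id_D)$, so $(L_x)^*=(x^*\ot\id_D)\circ m_D^*$, and item (1) becomes the morphism identity $(x^*\ot\id_D)m_D^*=m_D(S_D(x)\ot\id_D)$; on $B$ this is the standard relation $L_a^*=L_{a^*}$ for the $\GNS$ representation of $\psi$ (itself the tautology $\psi(b^*xa)=\psi((x^*b)^*a)$), and it transports through $F$ as above. Granting (1), the map $x\mapsto L_x$ is an injective $*$-homomorphism of the unital $*$-algebra $(D,m_D,S_D)$ into the C*-algebra $\mathcal{L}(D)$ (injective since $L_x1_D=x$), with automatically closed range by finite dimensionality; hence $\Vert x\Vert_D=\Vert L_x\Vert$ is a complete C*-norm and $D$ is a unital C*-algebra. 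For item (2), $\psi_D(x)=\langle x,1_D\rangle=\eta_D^*(x)$ is the $F$-image of $\psi=\eta_B^*$; its normalization $\psi_D(1_D)=1$ follows from $\eta_B^*\eta_B=\id_\C$ (so $\eta_D^*\eta_D=\id_\C$ after applying $F$), and the identity $\psi_D(S_D(y)x)=\langle x,y\rangle$ is the transported form of $\psi(y^*x)=\langle x,y\rangle$; taking $y=x$ gives $\psi_D(S_D(x)x)=\Vert x\Vert^2$, whence faithfulness. In particular this last identity says the $\GNS$ inner product of $\psi_D$ coincides with the original inner product on $D=K_\varepsilon$, so that $m_D^*$ is exactly the adjoint with respect to the form $\psi_D$; item (3) is then immediate, since $m_Dm_D^*=\delta\id_D$ is precisely Proposition \ref{PropAlgebra}.

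The step I expect to be the genuine obstacle is the bookkeeping in the involution and $*$-representation identities: the delicate point is to confirm that $S_D$ is really the adjoint operation of the resulting C*-algebra rather than merely an abstract antilinear involution, and that the inner product defining $m_D^*$ is the $\GNS$ inner product of $\psi_D$. Everything downstream rests on item (1) and on $\psi_D(S_D(y)x)=\langle x,y\rangle$, and both require threading the unitaries $F_2$, $F_2^*$ and the flip $\Sigma$ through several applications of (\ref{Eq1})--(\ref{Eq3}) without error. No new idea beyond transport of structure is needed, but the correct placement of these intertwiners is where a slip would most easily occur.
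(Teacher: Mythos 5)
Your core mechanism --- transporting the Frobenius-algebra data of $(B,m_B,\eta_B)$ through the unitary tensor functor $F$ --- is exactly the paper's mechanism, and most of your steps are sound: the companion formulas $m_D^*=F_2^*F(m_B^*)$, $\eta_D^*=F(\eta_B^*)$, the reduction of item (1) to a transported identity, the C*-norm via the injective $*$-representation $x\mapsto L_x$, and your treatment of items (2) and (3) all agree with the paper's proof. But there is a genuine gap at the step you yourself single out as the ``model case'': anti-multiplicativity of $S_D$ cannot be proved by ``collapsing each side to the $F$-image of the corresponding $B$-identity''. The obstruction is that an arbitrary vector $x\in D=K_\varepsilon$ is not a morphism of $\mathcal{R}(G)$ (it is not a fixed vector of $v_\varepsilon$) and is not in the image of $F$ in any sense, so an identity involving $x$ transports only if the $x$-dependence can be stripped off to leave an honest morphism identity. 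For item (1) this works: stripping $x$ from $(x^*\ot\id_D)m_D^*=m_D(S_D(x)\ot\id_D)$ leaves the Frobenius identity $(\id_D\ot m_D)(m_D^*\eta_D\ot\id_D)=m_D^*$, which is flip-free, is a composition of morphisms, and is the $F$-image of the same identity on $B$ --- this is precisely the paper's key lemma. Similarly $S_D^2=\id_D$ strips to the conjugate equation $(t^*\ot\id_D)(\id_D\ot t)=\id_D$, again flip-free. But stripping $x,y$ from $S_D(m_D(x\ot y))=m_D(S_D(y)\ot S_D(x))$ forces a permutation of tensor legs: the stripped identity reads
$$(m_D^*\ot\id_D)(t)=(\Sigma\ot\id_D)(\id_D\ot\id_D\ot m_D)(\id_D\ot\Sigma\ot\id_D)(t\ot t),$$
where $\Sigma$ is the flip on $D\ot D$. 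The flip is in general \emph{not} a morphism of $\mathcal{R}(G)$ (already for $S_n^+$, $n\geq 4$, the flip does not intertwine $u\ot u$; this failure is exactly what makes such quantum groups genuinely quantum), nor is the flip on $B\ot B$ a morphism on the other side. Hence neither side of the stripped identity is a composition of morphisms, there is no ``corresponding $B$-identity'' for $F$ to carry over, and Equations (\ref{Eq1})--(\ref{Eq3}) together with unitarity of $F_2$ cannot produce it. The proposed collapse would simply not close up.

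The repair is available inside your own outline, but it forces a reordering that reverses your logical dependencies: establish item (1) first (via the transported Frobenius identity, as above), and then obtain anti-multiplicativity and involutivity as operator-theoretic corollaries requiring no further transport, namely $S_D(xy)=(L_{xy})^*(1_D)=(L_y)^*(L_x)^*(1_D)=L_{S_D(y)}(S_D(x))=S_D(y)S_D(x)$ and $L_{S_D^2(x)}=(L_{S_D(x)})^*=L_x$. This is exactly how the paper proceeds: it transports a single flip-free identity, proves item (1) by a scalar-product computation, and deduces every property of $S_D$ --- and then the C*-identity and items (2), (3) --- by Hilbert-space arguments on $D$ alone. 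Your plan, which proves the involution axioms before and independently of item (1), puts the cart before the horse at the one point where the flip appears.
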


\begin{proof}
$(1)$ Using diagrams computations one can easily check that:
$$(\id\ot m_B)\circ (m_B^*\circ\eta_B\ot\id)=m_B^*.$$
Applying the functor $F$ we find $(\id\ot m_D)\circ (m_D^*\circ\eta_D\ot\id)=m_D^*$. Hence, for all $x,y,z\in D$, one has
\begin{eqnarray*}
\langle L_{S_D(x)}(y),z\rangle&=&\langle m_D(S_D(x)\ot y),z\rangle= \langle m_D((x^*\ot\id)\circ m_D^*\circ\eta_D\ot y),z\rangle\\
&=&\langle(x^*\ot\id)(\id\ot m_D)\circ(m_D^*\circ\eta_D\ot\id)(y),z\rangle
=\langle (x^*\ot\id)(m_D^*(y)),z\rangle\\
&=&\langle m_D^*(y),x\ot z\rangle=\langle y,m_D(x\ot z)\rangle=\langle y,L_x(z)\rangle=\langle(L_x)^*(y),z\rangle
\end{eqnarray*}
This concludes the proof of $1$ and shows that $S_D$ is anti-multiplicative since it implies that, for all $x,y\in D$,
$$S_D(xy)=(L_{xy})^*(1_D)=(L_y)^*(L_x)^*(1_D)=L_{S_D(y)}(S_D(x))=S_D(y)S_D(x).$$
Assertion $(1)$ also shows that $S_D$ is involutive since it implies that $L_{S_D^2(x)}=(L_{S_D(x)})^*=((L_x)^*)^*=L_x$ hence, $S_D^2(x)=x$. It follows that $S_D$ turns $(D,m_D,\eta_D)$ into an involutive unital algebra. It is moreover clear that $\Vert x\Vert_D:=\Vert L_x\Vert_{\mathcal{L}(D)}$, for $x\in D$, defines a norm on $D$ which satisfies the C*-condition since we have $\Vert S_D(x)x\Vert_D=\Vert L_{S_D(x)x}\Vert=\Vert L_{S_D(x)} L_x\Vert=\Vert L_x^* L_x\Vert=\Vert L_x\Vert^2=\Vert x\Vert_D$ for all $x\in D$.

$(2)$ One has $\psi_D(S_D(y)x)=\langle S_D(y)x,1_D\rangle=\langle L_{S_D(y)}(x),1_D\rangle=\langle(L_y)^*(x),1_D\rangle=\langle x,L_y(1_D)\rangle=\langle x,y\rangle$. It follows directly from this relation and the non degeneracy of the scalar product on $D$ that $\psi_D$ is faithful. It also follows that the scalar product induced by $\psi_D$ on $D$ is the same as the initial scalar product on $D$. Hence, taking the adjoint with the $\psi_D$-scalar product we have, from Proposition \ref{PropAlgebra}, $m_Dm_D^*=\delta\id_D$.
\end{proof}

We assume from now that the functor $F$ is \textit{nice enough} meaning that for any $\alpha\in\irr(G_0)$, there exists a unitary $V_\alpha\in\mathcal{L}(K_\alpha,K_\varepsilon\ot H_\alpha)$, with $V_\varepsilon=\id_D$, and such that the following diagram is commutative for all $\alpha,\beta,\gamma\in\irr(G_0)$ and all $S\in\text{Mor}(\alpha\ot\beta,\gamma)$:

$$\begin{array}{ccc}
(K_\varepsilon\ot H_\alpha)\ot( K_\varepsilon\ot H_\beta)&\overset{(m_D\ot S)\Sigma_{23}}{\longrightarrow} & K_\varepsilon\ot H_\gamma\\
\uparrow V_\alpha\ot V_\beta& &\uparrow V_{\gamma}\\
K_\alpha\ot K_\beta&\overset{F((m_B\ot S)\Sigma_{23})F_2}{\longrightarrow} & K_{\gamma}\\
\end{array}$$

Observe that any unitary tensor functor is nice enough if $G_0$ is the trivial quantum group. We are now ready to state our main result. Let us denote by $\widetilde{a}(\alpha)\in \mathcal{L}( D\ot H_\alpha )\ot C(H_{(D,\psi_D)}^+(G_0))$ the canonical generators. 

\begin{theorem}\label{ThmIso}
There exists a unique $*$-isomorphism $\pi\,:\,C(H_{(D,\psi_D)}^+(G_0))\rightarrow C(G)$ such that
$$(\id\ot\pi)(\widetilde{a}(\alpha))=(V_\alpha\ot 1)v_\alpha(V_\alpha^*\ot 1)\in\mathcal{L}( D\ot H_\alpha )\ot C(G)\quad\text{for all  }\alpha\in\irr(G_0).$$
Moreover, $\pi$ intertwines the comultiplications.
\end{theorem}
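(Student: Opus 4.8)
The plan is to produce $\pi$ from the universal property of $C(H_{(D,\psi_D)}^+(G_0))$. Set $w_\alpha:=(V_\alpha\ot 1)v_\alpha(V_\alpha^*\ot 1)\in\mathcal{L}(D\ot H_\alpha)\ot C(G)$, so that $V_\alpha$ is a unitary element of $\Mor(v_\alpha,w_\alpha)$ and, using $V_\varepsilon=\id_D$, one has $w_\varepsilon=v_\varepsilon=F(a(\varepsilon))$. First I would verify that the family $(w_\alpha)_{\alpha\in\irr(G_0)}$ satisfies the three defining relations of the free wreath product $H_{(D,\psi_D)}^+(G_0)$; by universality this yields a unique unital $*$-homomorphism $\pi$ with $(\id\ot\pi)(\widetilde a(\alpha))=w_\alpha$, and the uniqueness asserted in the statement is then automatic, since the coefficients of the $\widetilde a(\alpha)$ generate $C(H_{(D,\psi_D)}^+(G_0))$. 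It then remains to check that $\pi$ intertwines the coproducts and that it is bijective.

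For the defining relations: each $w_\alpha$ is unitary, being a conjugate of the unitary $v_\alpha$ by a unitary. For the counit relation, $\eta_B\in\Mor(\varepsilon,a(\varepsilon))$ gives, after applying $F$ and using $F(\varepsilon)=\varepsilon$ together with $\eta_D=F(\eta_B)$, that $\eta_D\in\Mor(\varepsilon,v_\varepsilon)=\Mor(\varepsilon,w_\varepsilon)$. The multiplicativity relation is exactly where I would invoke the nice-enough hypothesis. Starting from $(m_B\ot S)\Sigma_{23}\in\Mor(a(\alpha)\ot a(\beta),a(\gamma))$, applying $F$ and precomposing with the unitary $F_2\in\Mor(v_\alpha\ot v_\beta,F(a(\alpha)\ot a(\beta)))$ produces $T:=F((m_B\ot S)\Sigma_{23})F_2\in\Mor(v_\alpha\ot v_\beta,v_\gamma)$. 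The commuting square defining nice enough reads $(m_D\ot S)\Sigma_{23}=V_\gamma\,T\,(V_\alpha\ot V_\beta)^*$; since $V_\alpha\ot V_\beta\in\Mor(v_\alpha\ot v_\beta,w_\alpha\ot w_\beta)$ (tensor of intertwiners) and $V_\gamma\in\Mor(v_\gamma,w_\gamma)$, composing the three intertwiners $V_\gamma$, $T$, $(V_\alpha\ot V_\beta)^*$ shows $(m_D\ot S)\Sigma_{23}\in\Mor(w_\alpha\ot w_\beta,w_\gamma)$, as required.

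That $\pi$ intertwines the coproducts $\Delta$ of $H_{(D,\psi_D)}^+(G_0)$ and $\Delta_G$ of $G$ is a routine check on generators: as $w_\alpha$ is a representation of $G$, $(\id\ot\Delta_G)(w_\alpha)=(w_\alpha)_{12}(w_\alpha)_{13}$, which is exactly the image under $\id\ot\pi\ot\pi$ of $(\id\ot\Delta)(\widetilde a(\alpha))=\widetilde a(\alpha)_{12}\widetilde a(\alpha)_{13}$; since the coefficients of the $\widetilde a(\alpha)$ generate the algebra, $\Delta_G\circ\pi=(\pi\ot\pi)\circ\Delta$.

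The hard part will be bijectivity. Surjectivity is comparatively soft: since $F$ is an equivalence it is essentially surjective, so every irreducible representation of $G$ is a subobject of a tensor word in the $v_\alpha$, whence its matrix coefficients lie in the image of $\pi$; this image thus contains the dense subalgebra $\PolG$ and, being closed, equals $C(G)$. For injectivity I would exploit the rigidity of the fusion data from \cite{FP15}. Applying $\id\ot\pi$ sends the self-intertwiners of a word $\widetilde a(\alpha_1)\ot\dots\ot\widetilde a(\alpha_n)$ to self-intertwiners of $w_{\alpha_1}\ot\dots\ot w_{\alpha_n}$, giving an inclusion $\End_{H_{(D,\psi_D)}^+(G_0)}(\widetilde a(\alpha_1)\ot\dots\ot\widetilde a(\alpha_n))\subseteq\End_{G}(w_{\alpha_1}\ot\dots\ot w_{\alpha_n})$, and similarly for $\Mor$ between two different words. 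The key input is that the dimensions of these morphism spaces depend only on $G_0$ and not on the algebra (reducing to the $\varepsilon$ case by conjugates); combined with the full faithfulness of $F$ this forces the two sides to have equal dimension, so the inclusion is an equality. From this equality I would extract a bijection $\irr(H_{(D,\psi_D)}^+(G_0))\to\irr(G)$, $x\mapsto y_x$, under which $\pi$ carries the $(\dim x)^2$-dimensional coefficient space of $x$ bijectively onto that of $y_x$, and by Peter--Weyl $\pi$ then restricts to a linear isomorphism of the dense Hopf $*$-algebras, hence a $*$-isomorphism after passing to the maximal C*-completions. The delicate point is precisely this upgrade from equality of dimensions to equality of the intertwiner spaces and then to the matching of irreducibles, which is what guarantees that the surjection $\pi$ collapses nothing.
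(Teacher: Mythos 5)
Your proposal is correct, and Step 1 (constructing $\pi$ from the universal property, checking unitarity, the unit relation, and the multiplication relation via the nice-enough square, then getting surjectivity from essential surjectivity of $F$) is exactly the paper's argument. Where you diverge is the injectivity step. The paper reduces injectivity to a single clean criterion: a surjective morphism of compact quantum groups that intertwines the Haar states is an isomorphism. Intertwining of Haar states is checked on coefficients of the words $\widetilde{a}(\alpha_1)\ot\dots\ot\widetilde{a}(\alpha_n)$, where $(\id\ot h)$ of a word is the projection onto its space of invariant vectors; since $\id\ot\pi$ gives an inclusion $\Mor(\varepsilon,\widetilde{a}\text{-word})\subseteq\Mor(\varepsilon,u\text{-word})$, equality of the two projections is equivalent to equality of dimensions, which follows from monoidal equivalence plus the formula of \cite[Theorem 3.5]{FP15} — exactly the input you also use. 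Your route instead upgrades dimension equality to equality of \emph{all} morphism spaces between words, extracts a bijection on irreducibles, and concludes by Peter--Weyl. This works, but it is more laborious in two respects: (i) to know that dimensions of general spaces $\Mor(W,W')$ depend only on $G_0$ you must reduce to $\Mor(\varepsilon,\cdot)$ by Frobenius reciprocity, which requires knowing that conjugates of words are again (equivalent to) words, i.e.\ $\overline{a(\alpha)}\simeq a(\bar\alpha)$ — a fact from \cite{FP15} that you gloss over with ``by conjugates''; (ii) the irrep-matching and coefficient-space bookkeeping that you yourself flag as the delicate point is precisely what the Haar-state criterion packages away, since faithfulness of the Haar state on the dense Hopf $*$-algebra immediately gives injectivity there, and the isomorphism of maximal completions follows by universality. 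In short: same skeleton and same crucial input, but the paper's finishing move is more economical, needing only the $\Mor(\varepsilon,\cdot)$ counts that \cite{FP15} provides directly.
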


\begin{proof}
\textbf{Step 1.} \textit{There exists a surjective unital $*$-homomorphism $\pi\,:\,C(H_{(D,\psi_D)}^+(G_0))\rightarrow C(G)$ such that $(\id\ot\pi)(\widetilde{a}(\alpha))=(V_\alpha\ot 1)v_\alpha(V_\alpha^*\ot 1)\in\mathcal{L}( D\ot H_\alpha )\ot C(G)$ for all  $\alpha\in\irr(G_0)$.}

From the universal property of the C*-algebra $C(H_{(D,\psi_D)}^+(G_0))$ it suffices to check that the unitary representations $u_\alpha:=(V_\alpha\ot 1)v_\alpha(V_\alpha^*\ot 1)$ of $G$ on the Hilbert space $D\ot H_\alpha$ satisfy the following relations:
\begin{enumerate}
\item $(m_D\ot S)\Sigma_{23}\in\text{Mor}(u_\alpha\ot u_\beta,u_{\gamma})$ for all $\alpha,\beta,\gamma\in\irr(G_0)$ and all $S\in\text{Mor}(\alpha\ot\beta,\gamma)$,
\item $\eta_D\in\text{Mor}(\varepsilon,u_{\varepsilon}))$.
\end{enumerate}

$(1)$ Since $F$ is nice enough we have, for all $\alpha,\beta,\gamma\in\irr(G_0)$ and all $S\in\text{Mor}(\alpha\ot\beta,\gamma)$,
$$V_\gamma^*(m_D\ot S)\Sigma_{23}(V_\alpha\ot V_{\beta})=F((m_B\ot S)\Sigma_{23})F_2\in\text{Mor}(v_\alpha\ot v_\beta,v_\gamma).$$
It follows that $(((m_D\ot S)\Sigma_{23})\ot 1)(u_\alpha\ot u_\beta)$ is equal to:
\begin{eqnarray*}
&&(((m_D\ot S)\Sigma_{23})\ot 1)(V_\alpha\ot 1\ot 1)(v_\alpha)_{13}(V_\alpha^*\ot 1\ot 1)(1\ot V_\beta\ot 1)(v_\beta)_{23}(1\ot V_\beta^*\ot 1)\\
&&=((((m_D\ot S)\Sigma_{23})(V_\alpha\ot V_\beta))\ot 1)(v_\alpha\ot v_\beta)(V_\alpha^*\ot V_\beta^*\ot 1)\\
&&=(V_{\gamma}\ot 1)v_{\gamma}(V_{\gamma}^*((m_D\ot S)\Sigma_{23})(V_\alpha\ot V_\beta)(V_\alpha^*\ot V_\beta^*)\ot 1)=u_{\gamma}((m_D\ot S)\Sigma_{23}\ot 1).
\end{eqnarray*}

$(2)$. It is obvious since $\eta_B\in\text{Mor}(\varepsilon,a(\varepsilon))$ implies that $\eta_D=F(\eta_B)\in\text{Mor}(\varepsilon,v_\varepsilon)$ and $u_\varepsilon=v_\varepsilon$.

Note that $\pi$ automatically intertwines the comultiplications and $\pi$ is surjective since $F$ is essentially surjective.

\textbf{Step 2.} \textit{$\pi$ is an isomorphism.}

It suffices to check that $\pi$ intertwines the Haar measures $h$ on $C(H_{(D,\psi_D)}^+(G_0))$ and $h_G$ on $C(G)$. Since the linear span of the coefficients of representations of the form $\widetilde{a}(\alpha_1)\ot\dots\ot\widetilde{a}(\alpha_n)$, for $\alpha_1,\dots, \alpha_n\in\irr(G_0)$ and $n\geq 1$, is dense in $C(H_{(D,\psi_D)}^+(G_0))$, it suffices to check that, for all $n\geq 1$, $\alpha_1,\dots,\alpha_n\in\irr(G_0)$ one has
$$(\id\ot h_G)(u_{\alpha_1}\ot\dots\ot u_{\alpha_n})=(\id\ot h)(\widetilde{a}(\alpha_1)\ot\dots\ot\widetilde{a}(\alpha_n)),$$
which is equivalent to $\text{dim}(\text{Mor}(\varepsilon,u_{\alpha_1}\ot\dots\ot u_{\alpha_n}))=\text{dim}(\text{Mor}(\varepsilon,\widetilde{a}(\alpha_1)\ot\dots\ot\widetilde{a}(\alpha_n))$. Since $u_\alpha\simeq v_\alpha=F(a(\alpha))$ for all $\alpha\in\irr(G_0)$ and $F$ is a monoidal equivalence, the left hand side is equal to
$$\text{dim}(\text{Mor}(\varepsilon,v_{\alpha_1}\ot\dots\ot v_{\alpha_n}))=\text{dim}(\text{Mor}(\varepsilon,a(\alpha_1)\ot\dots\ot a(\alpha_n)).$$
Moreover since, $\psi_D$ is a $\delta$-form we also know from \cite[Theorem 3.5]{FP15} an explicit formula for the number $\text{dim}(\text{Mor}(\varepsilon,a(\alpha_1)\ot\dots\ot a(\alpha_n))$ which only depends on $G_0$ and not on $(B,\psi)$ or $(D,\psi_D)$ and we have
$$\text{dim}(\text{Mor}(\varepsilon,a(\alpha_1)\ot\dots\ot a(\alpha_n))=\text{dim}(\text{Mor}(\varepsilon,\widetilde{a}(\alpha_1)\ot\dots\ot \widetilde{a}(\alpha_n)).$$
\end{proof}

\bibliographystyle{alpha}
\bibliography{biblio}

\noindent
{\sc Pierre FIMA} \\ \nopagebreak
  {Univ Paris Diderot, Sorbonne Paris Cit\'e, IMJ-PRG, UMR 7586, F-75013, Paris, France \\
  Sorbonne Universit\'es, UPMC Paris 06, UMR 7586, IMJ-PRG, F-75005, Paris, France \\
  CNRS, UMR 7586, IMJ-PRG, F-75005, Paris, France \\
\em E-mail address: \tt pierre.fima@imj-prg.fr}

\vspace{0.2cm}

\noindent
{\sc Lorenzo PITTAU} \\ \nopagebreak
 \em E-mail address: \tt lorenzopittau@gmail.com

\end{document}